%% This is a LaTeX template for preparing papers for Publ. Inst. Math.; version January 2016
%% Please delete everything begining with %% (DOUBLE %).

% Submission number: 
\documentclass[a4paper, draft, 12pt]{amsproc}
\usepackage{amssymb}
\usepackage{amsmath,amssymb,ulem,color}

\usepackage[hyphens]{url} \urlstyle{same}
\usepackage[dvips]{graphicx} 
\usepackage{cmdtrack} 
\usepackage{mathrsfs}                       

\theoremstyle{plain}
 \newtheorem{theorem}{Theorem}[section]

 \newtheorem{cor}{Corollary}[section]
\theoremstyle{Definition}
 \newtheorem{exm}{Example}[section]
 
 \newtheorem{dfn}{Definition}[section]
\theoremstyle{remark}

 \numberwithin{equation}{section}

%% Please, do not change the following four lines:
\renewcommand{\leq}{\leqslant}
\renewcommand{\geq}{\geqslant}

\setlength{\textwidth}{33cc} \setlength{\textheight}{48cc}

\title[On  Completely Invariant Julia Sets of Trascendental Semigroups]{On Completely Invariant Julia Sets of Transcendental Semigroups}

\subjclass[2010]{37F10, 30D05}

%% Please use the newest classification -- 2010
%% available at  http://msc2010.org/MSC-2010-server.html
%% and the newest amsproc.cls.
%% Please, classify to the third level,
%% e.g., 26A and 26Axx are not satisfsctory.

\keywords{Rational semigroup, transcendental semigroup, Julia set, completely invariant Julia set, completely invariant Fatou set.}

\author[B. H. Subedi]{\bfseries  Bishnu Hari Subedi}

\address{ %% Put here your affiliation; street address is not required
Central Department of Mathematics \\ % \hfill (Received 00 00 201?)\\
Institute of Science and Technology   \\ %\hfill (Revised  00 00 201?)\\
Tribhuvan University   \\ %\hfill (Revised  00 00 201?)\\
Kirtipur, Kathmandu\\
Nepal}
\email{subedi.abs@gmail.com / subedi\_bh@cdmathtu.edu.np }

\author[A. Singh]{Ajaya Singh}
\address{Central Department of Mathematics, Institute of Science and Technology, Tribhuvan University, Kirtipur, Kathmandu, Nepal }
\email{singh.ajaya1@gmail.com / singh\_a@cdmathtu.edu.np} 
\thanks{This research work of first author is supported by PhD faculty fellowship from University Grants Commission, Nepal.} %% optional
%\thanks{Communicated by ...} %% This will be filled in the journal office.

\begin{document}

{\begin{flushleft}\baselineskip9pt\scriptsize
%PUBLICATIONS DE L'INSTITUT MATH\'EMATIQUE\newline
%Nouvelle s\'erie, tome ??(1??)) (201?), od--do \hfill DOI: \\
%MANUSCRIPT
\end{flushleft}}
\vspace{18mm} \setcounter{page}{1} \thispagestyle{empty}

\begin{abstract}
In holomorphic semigroup dynamics, Julia set is in general backward invariant and so some fundamental results of classical complex dynamics can not be generalized to semigroup dynamics. In this paper, we define completely invariant Julia set of transcendental semigroup and we see how far the results of classical transcendental dynamics generalized to transcendental semigroup dynamics. 
\end{abstract}

\maketitle

\section{Introduction}

Let us denote the class of all rational maps on $ \mathbb{C_{\infty}} $ by $ \mathscr{R} $ and class of all transcendental entire maps on $ \mathbb{C} $ by $ \mathscr{E} $. 
Our particular interest is to study of dynamics of the families of above two classes of complex analytic (holomorphic) maps.   For a collection $\mathscr{F} = \{f_{\alpha}\}_{\alpha \in \Delta} $ of such maps, let 
$$
S =\langle f_{\alpha} \rangle
$$ 
be a \textit{holomorphic semigroup} generated by them. Here $ \mathscr{F} $ is either a collection $ \mathscr{R} $ of rational maps or a collection $ \mathscr{E} $ of transcendental entire maps. $ \Delta $ is an index set to which $ \alpha $  belongs is  finite or infinite. 
Each $f \in S$ is constructed through the composition of finite number of functions $f_{\alpha_k},\;  (k=1, 2, 3,\ldots, m) $. That is, $f =f_{\alpha_1}\circ f_{\alpha_2}\circ f_{\alpha_3}\circ \cdots\circ f_{\alpha_m}$. In particular, if $ f_{\alpha} \in \mathscr{R} $, we say $ S =\langle f_{\alpha} \rangle$ a rational semigroup and if  $ f_{\alpha} \in \mathscr{E} $, we say $ S =\langle f_{\alpha} \rangle$ a transcendental semigroup. 

A semigroup generated by finitely many holomorphic functions $f_{i}, (i = 1, 2, \ldots, \\ n) $  is called \textit{finitely generated  holomorphic semigroup}. We write $S= \langle f_{1},f_{2},\ldots,f_{n} \rangle$.
 If $S$ is generated by only one holomorphic function $f$, then $S$ is \textit{cyclic semigroup}. We write $S = \langle f\rangle$. In this case, each $g \in S$ can be written as $g = f^n$, where $f^n$ is the nth iterates of $f$ with itself. Note that in our study of  semigroup dynamics, we say $S = \langle f\rangle$  a \textit{trivial semigroup}. 
  
In classical complex dynamics, each of Fatou set and Julia set are defined in two different but equivalent ways.  In first definition, Fatou set is defined as the set of normality of the iterates of given function and Julia set is defined as the complement of the Fatou set. The second definition of  Fatou set is given as a largest completely invariant open set and Julia set is given as a smallest completely invariant close set.  
Each of these definitions can be naturally extended to the settings of holomorphic semigroup $ S $. In this paper, we see that these extension definitions are not equivalent in holomorphic semigroup dynamics.  
Based on above first definition (that is,  on the Fatou-Julia theory of a complex analytic function), the Fatou set and Julia set are defined as follows. 
 \begin{dfn}[\textbf{Fatou set, Julia set}]\label{2ab} 
\textit{Fatou set} of the semigroup $S$ is defined by
  \[F (S) = \{z \in \mathbb{C}: S\;\ \textrm{is normal in a neighborhood of}\;\ z\}\] 
and the \textit{Julia set} $J(S) $ of $S$ is its compliment.  Any maximally connected subset $ U $ of the Fatou set $ F(S) $ is called \textit{Fatou component}.     
\end{dfn} 
If $S = \langle f\rangle$, then $F(S), J(S)$ and $I(S)$ are respectively the Fatou set and Julia set in classical complex dynamics. In this situation we simply write: $F(f)$ and $J(f)$. 
From the definition \ref{2ab}, it is clear that  $F(S)$ is the open set and therefore, it complement $ J(f) $ is closed set. Indeed, these definitions generalize the definitions of Julia set and Fatou set of the iteration of single holomorphic map. 

The fundamental contrast between classical complex dynamics and semigroup dynamics appears by different algebraic structure of corresponding semigroups. In fact, non-trivial semigroup (rational or transcendental) need not be, and most often will not be abelian. However, trivial semigroup is cyclic and therefore abelian. As we discussed before, classical complex dynamics is a dynamical study of trivial (cyclic)  semigroup whereas semigroup dynamics is a dynamical study of non-trivial semigroup. 
 
Note that for any holomorphic semigroup $ S $, we have
\begin{enumerate}  
 \item $F(S) \subset F(f)$ for all $f \in S$  and hence  $F(S)\subset \bigcap_{f\in S}F(f)$. 
\item $ J(f) \subset J(S) $ for all $f \in S$.
 \end{enumerate}

\begin{dfn}[\textbf{Forward, backward and completely invariant set}]
Let $ f $ is a map of a set $ X $ into itself.  A subset $U\subset X$ is said to be 
\begin{enumerate}
\item forward invariant\index{forward ! invariant set} under $ f $ if $f(U)\subset U$;
\item backward invariant\index{backward ! invariant set} under $ f $ if $f^{-1}(U) = \{z \in \mathbb{C}: f(z) \in U \}\subset U$; 
\item completely invariant\index{completely invariant ! set} under $ f $ if it is both forward and backward invariant.
\end{enumerate}
\end{dfn}
It is well known that Fatou set $ F(f) $ and Julia Set $ J(f) $ of any holomorphic map $ f $ are completely invariant. If $ S $ is a rational semigroup, then Hinkkanen and Martin {\cite [Theorem 2.1]{hin}} proved the following result.
\begin{theorem}\label{fi}
The Fatou set $ F(S) $ is forward invariant under each element of $ S $ and Julia set $ J(S) $ is backward invariant for each element of $ S $.
\end{theorem}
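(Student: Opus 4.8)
The plan is to prove the two assertions as a single statement, because they are logically equivalent. Writing $J(S)=\mathbb{C}_{\infty}\setminus F(S)$, the inclusion $f(F(S))\subseteq F(S)$ says precisely that $z\in F(S)\Rightarrow f(z)\in F(S)$, whose contrapositive is $f(z)\in J(S)\Rightarrow z\in J(S)$, that is, $f^{-1}(J(S))\subseteq J(S)$. Hence it suffices to prove that $F(S)$ is forward invariant under an arbitrary $f\in S$, and the backward invariance of $J(S)$ will follow automatically.

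So I would fix $f\in S$ and $z_0\in F(S)$, set $w_0=f(z_0)$, and try to produce a neighborhood of $w_0$ on which $S$ is normal. First I would choose, by Definition \ref{2ab}, a neighborhood $U$ of $z_0$ on which the family $S$ is normal. Since every element of $S$ is a non-constant holomorphic map, $f$ is an open mapping, so $f(U)$ is an open neighborhood of $w_0$. The key structural input is the semigroup property: for every $g\in S$ we have $g\circ f\in S$, so the family $\{\,g\circ f:g\in S\,\}$ is contained in $S$ and is therefore normal on $U$.

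The heart of the argument is to transport normality from $U$ across $f$ to $f(U)$. Given any sequence $(g_n)\subset S$, the sequence $(g_n\circ f)\subset S$ has a subsequence $(g_{n_k}\circ f)$ that converges locally uniformly on $U$ with respect to the spherical metric $\rho$. Now I would pick a closed disk $\overline{D}\subset U$ centered at $z_0$; then $f(D)$ is an open neighborhood of $w_0$, and for any $w\in f(D)$ we may write $w=f(z)$ with $z\in D\subseteq\overline{D}$, which gives the pointwise domination
\[
\rho\bigl(g_{n_k}(w),\,g_{n_j}(w)\bigr)=\rho\bigl(g_{n_k}(f(z)),\,g_{n_j}(f(z))\bigr)\le \sup_{\zeta\in\overline{D}}\rho\bigl(g_{n_k}(f(\zeta)),\,g_{n_j}(f(\zeta))\bigr).
\]
Taking the supremum over $w\in f(D)$ shows that $(g_{n_k})$ is uniformly Cauchy in $\rho$ on $f(D)$, precisely because $(g_{n_k}\circ f)$ is uniformly Cauchy on the compact set $\overline{D}$. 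Hence $(g_{n_k})$ converges uniformly on $f(D)$, so $S$ is normal on the neighborhood $f(D)$ of $w_0$, giving $w_0\in F(S)$, which is exactly forward invariance.

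I expect the main obstacle to be precisely this transport step, since $f$ need not be injective: at a critical point of $f$ one cannot simply invert it and push the limit forward. The device that sidesteps the difficulty is the displayed inequality, which uses only the surjection $f\colon\overline{D}\to f(\overline{D})\supseteq f(D)$ together with the Cauchy criterion, and never an inverse of $f$. Working throughout in the spherical metric is what allows a single estimate to cover uniformly both the case of a finite (meromorphic) limit and the case of the limit $\infty$, so that no separate case analysis is required.
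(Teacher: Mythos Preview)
The paper does not provide its own proof of this theorem; it is stated as a known result of Hinkkanen and Martin and merely cited. Your argument is correct and is essentially the standard proof of this fact: the semigroup property guarantees $g\circ f\in S$ for every $g\in S$, and your Cauchy-criterion estimate in the spherical metric transports normality from $\overline{D}$ to $f(D)$ without ever requiring a local inverse of $f$.
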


If $ S $ is a transcendental semigroup, then K. K. Poon  {\cite[Theorem 2.1] {poo}} proved the following result.
\begin{theorem}\label{fi1}
The Fatou set $ F(S) $ is forward invariant for each element of $ S $ and Julia set $ J(S) $  is backward invariant for each element of $ S $.
\end{theorem}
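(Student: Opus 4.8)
The plan is to prove the forward invariance of $F(S)$ directly from the normality definition and then deduce the backward invariance of $J(S)$ by complementation. The bridge for the second half is the purely set-theoretic equivalence
$$f(F(S)) \subset F(S) \iff F(S) \subset f^{-1}(F(S)) \iff f^{-1}(J(S)) \subset J(S),$$
where the last step uses that $f^{-1}(J(S)) = f^{-1}(\mathbb{C}\setminus F(S)) = \mathbb{C}\setminus f^{-1}(F(S))$, since preimage commutes with complement. Hence once $f(F(S))\subset F(S)$ is established for every $f\in S$, the backward invariance of the Julia set is automatic and needs no further analytic input.

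To prove $f(F(S))\subset F(S)$ for a fixed $f\in S$, I would take a point $z_0\in F(S)$ and, by Definition \ref{2ab}, fix a neighborhood $U$ of $z_0$ on which the family $S$ is normal. Put $w_0=f(z_0)$. Because $f$ is a non-constant entire map it is an open mapping, so $V=f(U)$ is an open neighborhood of $w_0$, and it suffices to show that $S$ is normal on $V$. Given any sequence $(g_n)$ in $S$, the closure of $S$ under composition yields $g_n\circ f\in S$ for every $n$; thus $(g_n\circ f)$ is a sequence drawn from the family $S$ considered on $U$. By normality on $U$ I can extract a subsequence, still denoted $(g_n\circ f)$, converging locally uniformly on $U$ to a limit $\varphi$ that is either holomorphic or identically $\infty$.

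The core step is transporting this convergence across $f$ from $U$ to $V=f(U)$. At each $z\in U$ with $f'(z)\neq 0$ the map $f$ is a local biholomorphism, so on a small neighborhood $V_z$ of $f(z)$ one writes $g_n=(g_n\circ f)\circ f^{-1}$, and local uniform convergence of $g_n\circ f$ together with continuity of the local inverse gives local uniform convergence of $(g_n)$ on $V_z$. Since the critical points of $f$ are isolated, these neighborhoods cover $V$ away from a discrete set of critical values, and the convergence extends across that discrete set by a removable-singularity argument when $\varphi$ is holomorphic and by a Hurwitz/continuity argument when $\varphi\equiv\infty$. This shows $S$ is normal on $V$, so $w_0=f(z_0)\in F(S)$, giving $f(F(S))\subset F(S)$ and, by the equivalence above, $f^{-1}(J(S))\subset J(S)$.

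I expect the main obstacle to be precisely this transport step in the transcendental setting: unlike for a single iterate, one must ensure the extracted limit is \emph{uniformly} of one type, either finite-holomorphic or $\equiv\infty$, over all of $V$, while simultaneously absorbing the isolated critical values of $f$ and keeping in mind that a transcendental entire map has an essential singularity at $\infty$ (so $\infty$ is never introduced and all points in play remain finite). Once the dichotomy for $\varphi$ is respected and the discrete exceptional set is handled, the argument closes and Theorem \ref{fi1} follows.
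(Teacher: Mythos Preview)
The paper does not supply its own proof of Theorem~\ref{fi1}; it is quoted as Poon's result \cite[Theorem~2.1]{poo}, so there is no in-paper argument to compare against.

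Your outline is correct and is exactly the standard route: exploit the semigroup property to pull a sequence $(g_n)$ on $V=f(U)$ back to $(g_n\circ f)$ on $U$, extract a convergent subsequence there, and push the convergence forward to $V$. The set-theoretic reduction of backward invariance of $J(S)$ to forward invariance of $F(S)$ is also clean and correct.

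You are, however, making the transport step harder than it needs to be, and the ``main obstacle'' you anticipate is not real. You do not need local inverses, you do not need to separate regular values from critical values, and there is no discrete exceptional set to remove. Fix a closed disk $\overline{D}\subset U$ centred at $z_0$; then $f(D)$ is an open neighbourhood of $w_0$. Once $(g_{n_k}\circ f)$ converges uniformly on $\overline{D}$ in the spherical metric, note that for every $w\in f(\overline{D})$ and \emph{any} preimage $z\in\overline{D}$ one has $g_{n_k}(w)=(g_{n_k}\circ f)(z)$; hence
\[
\sup_{w\in f(\overline{D})}\operatorname{dist}\bigl(g_{n_k}(w),g_{n_j}(w)\bigr)\ \le\ \sup_{z\in\overline{D}}\operatorname{dist}\bigl((g_{n_k}\circ f)(z),(g_{n_j}\circ f)(z)\bigr),
\]
so $(g_{n_k})$ is uniformly spherically Cauchy on $f(\overline{D})$. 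This single estimate already covers both the holomorphic-limit and the identically-$\infty$ cases, so the dichotomy worry and the concern about the essential singularity at $\infty$ are unnecessary. With this simplification your proof closes without further obstacle.
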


Indeed,  theorems \ref{fi} and \ref{fi1} are contrasting to classical complex dynamics. We generalize the classical completely  invariant notion of Fatou set and Julia set of single function to the completely  invariant  notion of these sets in semigroup dynamics. That is, we make completely invariant Fatou set and Julia set  of semigroup $ S $  in different way under each element of $ S $.  In rational semigroup dynamics, Rich Stankewitz \cite{stan} and Rich Stankewitz, Toshiyori Sugawa and Hiroki Sumi \cite{stan1} studied completely invariant Julia set and Fatou set. They ({\cite[Definition 3]{stan}} and  {\cite[Definitions 1.3 and 1.4]{stan1}}) defined it as follows. 

Let $ S $ be a rational semigroup. The completely invariant Julia set of $ S $ is the set of the form 
$$
E(S) = \bigcap\{G: G\; \text{is closed, completely invariant under each} \; g \in S, \#(G) \geq 3\}
$$ 
where $ \#(G) $ denote the cardinality of $ G $. The completely invariant Fatou set $ W(S)  $  of $ S $ to be the complement of $ E(S) $. That is, $ W(S) =\mathbb{C_{\infty}}- E(S) $.

By the similar fashion, we can think completely invariant Julia set and Fatou set of transcendental semigoup.
\begin{dfn}\label{cir}
Let $ S $ be a transcendental semigroup. We define completely invariant Julia set by
$$
E(S) = \bigcap\{E: E\; \text{is closed, completely invariant set under each}\; f \in S\}
$$
The completely invariant Fatou set $ W(S) $ is defined as the complement of $ E(S) $ in $ \mathbb{C} $. That is, $ W(S) =\mathbb{C}- E(S) $.
\end{dfn}

Note that in transcendental semigroup $ S $,  $E(S)$ exists, is closed and completely invariant under each $ f \in S $ and it contains the Julia set of each element of $ S $. The corresponding Fatou set is open, completely invariant and contained in the Fatou set of each element of $ S $.  We prove the following result which is a generalization of classical complex dynamics to semigroup dynamics. 
\begin{theorem}\label{cijs2}
If $ E(S) $ has non-empty interior, then $ E(S) =\mathbb{C} $.
\end{theorem}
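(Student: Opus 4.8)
The plan is to assume $\operatorname{int} E(S)\neq\emptyset$ and force the forward orbit of this interior under a single element of $S$ to fill up the plane, exploiting that every $f\in S$ is a composition of transcendental entire maps and hence itself transcendental entire, so Montel's theorem applies. The first thing I would record are two elementary invariance facts. Since every non-constant holomorphic map is open, and since the two-sided invariance adopted in the definition of complete invariance is equivalent to $f^{-1}(A)=A$, it follows that both $\operatorname{int} E(S)$ and the boundary $\partial E(S)=E(S)\setminus\operatorname{int} E(S)$ are again completely invariant under every $f\in S$; the former is open, the latter closed. This is the structural input that drives the argument.

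I would then set $U=\operatorname{int} E(S)$ and split into two cases according to whether $U$ meets any individual Julia set. In the first case there is an element $f\in S$ and a connected component $V$ of $U$ with $V\cap J(f)\neq\emptyset$. Then $\{f^n\}$ is not normal on the domain $V$, so by Montel's theorem the family cannot omit three values of $\mathbb{C_\infty}$ on $V$; as each $f^n$ is entire and hence already omits $\infty$, the set $\bigcup_n f^n(V)$ misses at most one finite value and is therefore dense in $\mathbb{C}$. Forward invariance of $E(S)$ gives $\bigcup_n f^n(V)\subseteq E(S)$, and since $E(S)$ is closed we conclude $E(S)=\mathbb{C}$.

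In the remaining case $U\cap J(f)=\emptyset$ for every $f\in S$. Because $J(f)\subseteq E(S)$ for each $f$ (the remark following Definition \ref{cir}), this forces $\bigcup_{f\in S}J(f)\subseteq E(S)\setminus U=\partial E(S)$. Hence $\partial E(S)$ is a closed, completely invariant set containing the Julia set of every element of $S$, i.e. an admissible competitor in the intersection defining $E(S)$. Minimality of $E(S)$ then yields $E(S)\subseteq\partial E(S)$, whence $\operatorname{int} E(S)=\emptyset$, contradicting our assumption. Thus this case cannot occur, so we are always in the first case and $E(S)=\mathbb{C}$.

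The step I expect to be delicate is the second (degenerate) case. It hinges on reading $E(S)$ as the smallest closed completely invariant set that still contains all the $J(f)$ — exactly the content of the remark after Definition \ref{cir}, and the analogue of the cardinality restriction $\#(G)\geq 3$ used in the rational setting — so that $\partial E(S)$ genuinely qualifies as a competitor and minimality can be invoked; without such an admissibility condition the intersection would collapse to $\emptyset$. I would also state explicitly that verifying complete invariance of $\operatorname{int} E(S)$ and $\partial E(S)$ uses the open mapping theorem together with the equivalence of the paper's forward-and-backward invariance with $f^{-1}(A)=A$. Both points are routine but are the places where a proof could silently go wrong.
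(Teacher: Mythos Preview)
Your argument is correct. In Case~1 you and the paper do essentially the same thing: locate a disk in $\operatorname{int}E(S)$ that meets some $J(h)$, invoke the Montel/blow-up property of the Julia set so that the forward iterates of the disk miss at most one finite value, and then use forward invariance and closedness of $E(S)$ to conclude $E(S)=\mathbb{C}$. The paper quotes a blow-up lemma from Hua--Yang in place of your direct appeal to Montel, but the content is identical.

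Where you differ from the paper is in your explicit Case~2. The paper simply asserts that a disk inside $E(S)$ can be chosen to intersect $J(h)$ for some $h\in S$ and proceeds; it offers no justification for this, and it is not immediate from the definitions. You instead dispose of the alternative by a minimality argument: if $\operatorname{int}E(S)$ missed every $J(f)$, then $\partial E(S)$ would be a closed, completely invariant set containing every $J(f)$, hence an admissible competitor in the intersection defining $E(S)$, forcing $E(S)\subseteq\partial E(S)$ and contradicting $\operatorname{int}E(S)\neq\emptyset$. This is a genuine addition that patches a gap in the paper's proof, and your verification that $\operatorname{int}E(S)$ and $\partial E(S)$ inherit complete invariance (via openness of each $f$ and the equivalence of two-sided invariance with $f^{-1}(A)=A$) is exactly what is needed. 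Your caveat about reading Definition~\ref{cir} so that competitors must contain each $J(f)$ (as the remark following the definition indicates) is also well placed; without it the intersection would be vacuous.
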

Indeed, this theorem \ref{cijs2} is a good connection between classical transcendental dynamics and transcendental semigroup dynamics. 

The set $ E(S) $ just we defined above in the definition \ref{cir} may and may not be the set $ J(S) $. The following examples can help to compere the sets $ E(S) $ and $ J(S) $.
\begin{exm}\label{exm1}
Let $ S =\langle f, g \rangle $ be a transcendental semigroup generated by $ f(z) = \lambda \sin z $ and $ g(z) = \lambda \sin z + 2\pi $. Then $ E(S) = J(f) =J(g) $. It is also verified that $ J(S) = J(f) =J(g) $. In this case, $ E(S) = J(S) $ and so $ W(S) = F(S) $
\end{exm} 
\begin{exm}\label{exm2}
Let $ S =\langle f, g \rangle $ be a transcendental semigroup generated by $ f(z) = \lambda e^{z}, (0< \lambda < e^{-1}) $ and $ g(z) = z - e^{z}  +1 + 2\pi i$. Then $ J(f) $ is a Cantor bouquet and $ J(g) $ is different from Cantor bouquet. Both of the functions $ f $ and $ g $ have a completely invariant Fatou component. In this case, 
\end{exm}
In this paper, we prove the following result that will strengthen the examples \ref{exm1} and \ref{exm2}.
\begin{theorem}\label{nf1}
Let $ S =\langle f, g \rangle $ be a semigroup generated by two transcendental entire functions $ f $ and $ g $ such that $ J(f) \neq J(g) $. If each function $ f $ and $ g $ has a completely invariant Fatou component, then $ E(S) =\mathbb{C} $. 
\end{theorem}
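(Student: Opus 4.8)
The plan is to derive the conclusion from Theorem \ref{cijs2} by showing that $E(S)$ has non-empty interior (indeed, that $E(S)=\mathbb{C}$ directly). The first step is to extract the local structure forced by the hypothesis. If a transcendental entire function $h$ possesses a completely invariant Fatou component $U_h$, then it is a classical fact, going back to Baker, that $h$ has no other Fatou component, so that $F(h)=U_h$ and consequently $J(h)=\partial U_h=\mathbb{C}\setminus U_h$. Applying this to $f$ and to $g$ gives $J(f)=\mathbb{C}\setminus U_f$ and $J(g)=\mathbb{C}\setminus U_g$. Since $E(S)$ is closed, completely invariant under each element of $S$, and contains $J(f)\cup J(g)$, this already yields $E(S)\supseteq J(f)\cup J(g)=\mathbb{C}\setminus(U_f\cap U_g)$, equivalently $W(S)\subseteq U_f\cap U_g$.

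Next I would bring in the hypothesis $J(f)\neq J(g)$, which in this notation says precisely that $U_f\neq U_g$. Hence one of the open sets $U_f\setminus U_g$ or $U_g\setminus U_f$ is non-empty, and by the symmetry between $f$ and $g$ I may assume there is a point $z_0\in U_f\setminus U_g=U_f\cap J(g)$. Then $z_0$ lies in the open set $U_f$ and, since $z_0\in J(g)\subseteq E(S)$, also in $E(S)$; in particular $E(S)\cap U_f\neq\emptyset$. The set $A:=E(S)\cap U_f$ is then a non-empty, relatively closed subset of the Fatou component $U_f$ which is completely invariant under $f$, this being immediate from $f(E(S)\cap U_f)\subseteq f(E(S))\cap f(U_f)\subseteq E(S)\cap U_f$ together with the analogous inclusion for $f^{-1}$.

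The heart of the matter is to promote the single point $z_0$ to an \emph{open} subset of $E(S)$. Here I would use the blow-up property of the Julia set: since $z_0\in J(g)$, for every neighbourhood $N$ of $z_0$ the forward orbit $\bigcup_{n\ge 0}g^{n}(N)$ misses at most one point of $\mathbb{C}$. Because $E(S)$ is forward invariant under $g$ and backward invariant under both $f$ and $g$, I would combine this expansion with the two backward-orbit structures available inside $E(S)$ — the $f$-backward orbit of $z_0$, which stays in $U_f$ and accumulates on all of $J(f)=\partial U_f$, and the $g$-backward orbit of $z_0$, which is dense in $J(g)$ — to show that the grand orbit of $J(f)\cup J(g)$ under the semigroup $\langle f,g\rangle$ is dense in $\mathbb{C}$. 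That grand orbit is contained in the closed set $E(S)$, so its density forces $E(S)=\mathbb{C}$; alternatively one argues that $A$ cannot be nowhere dense, so $E(S)$ acquires interior and Theorem \ref{cijs2} finishes the proof.

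I expect this last step to be the main obstacle, and it deserves the most care. The difficulty is structural: forward iteration under $g$ of an entire neighbourhood of $z_0$ immediately leaves $E(S)$ (such a neighbourhood is not contained in $E(S)$), whereas any iteration that is guaranteed to remain inside $E(S)$ tends to remain inside the nowhere-dense sets $J(f)$ or $J(g)$ — so $f$ and $g$ must genuinely interact before density can be achieved. Pinning down this interlocking of the $f$- and $g$-grand orbits is exactly where the assumption $J(f)\neq J(g)$ is indispensable: when $J(f)=J(g)$ the common Julia set is already completely invariant under the whole semigroup, the grand orbit cannot spread, and $E(S)=J(S)\neq\mathbb{C}$, which is precisely the situation of Example \ref{exm1}.
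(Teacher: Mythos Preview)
Your proposal has two genuine problems.

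First, a factual issue: you assert as ``a classical fact, going back to Baker'' that a completely invariant Fatou component $U_h$ of a transcendental entire function $h$ must be the \emph{only} Fatou component, so that $F(h)=U_h$ and $J(h)=\mathbb{C}\setminus U_h$. This is not a theorem; it is a long-standing open question. What \emph{is} known, and what the paper actually invokes (via \cite[Theorem~4.36]{hou}), is the weaker statement that $U_h$ is unbounded, simply connected, and satisfies $\partial U_h=J(h)$. Your setup can be salvaged using only this weaker fact, since all you really need is a point in $U_f\cap J(g)$ (or symmetrically in $U_g\cap J(f)$); the paper extracts such a point from $\partial U\neq\partial V$ together with the unboundedness of $U$, $V$, $J(f)$, $J(g)$.

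Second, and more seriously, you explicitly leave the decisive step unfinished. You correctly isolate the task --- promoting the single point $z_0\in E(S)\cap U_f$ to interior in $E(S)$ --- you sketch a blow-up/backward-orbit strategy, and then you honestly diagnose why that strategy stalls (forward $g$-iteration of a full neighbourhood of $z_0$ does not stay in $E(S)$, while iteration that is guaranteed to stay in $E(S)$ tends to remain in the nowhere-dense sets $J(f)$ or $J(g)$). But you do not resolve it; the proposal ends at the obstacle. The paper's route at this point is different from your blow-up idea and avoids iterating open neighbourhoods altogether: starting from a point $z\in J(f)\cap V$, it iterates the \emph{point} $z$ forward under $g$, which stays simultaneously in $E(S)$ (forward invariance of $E(S)$ under $g$) and in the open Fatou component $V$ (forward invariance of $V$ under $g$), does the symmetric thing with $f$ and $U$, and then passes to limit points of the orbits $(g^n(z))$ and $(f^n(w))$ inside the closed set $E(S)$ to produce interior, concluding via Theorem~\ref{cijs2}. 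If you want to complete your argument, this forward-orbit-within-the-Fatou-component mechanism is the missing ingredient to replace the blow-up step.
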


\section{Results from rational semigroup dynamics}

Stankewitz \cite{stan} compered the sets $ J(S) $ and $ E(S) $ from the reference of the following two examples.
\begin{exm}{\cite[Example-1]{stan}}
Suppose that $ S = \langle f, g \rangle $ and $ J(f) = J(g) $. Then $E(S)= J(f) = J(g) $ as $ J(f) $ is completely invariant under $ f $ and $ J(g) $ is completely invariant under $ g $. Also, if $ J(f) = J(g) $, then $J(S)= J(f) = J(g)  $. So as complement of $ J(S) $ and $ E(S) $, we have $ F(S) =W(S) $. 
\end{exm}
\begin{exm}{\cite[Example-2]{stan}}
Let $ S =\langle z^{2}, z^{2}/a \rangle $, where $ a \in \mathbb{C}, |a| >1 $. Then  Julia set $ J(S)  = \{z : 1 \leq  |z| \leq  |a| \} $ which not forward invariant. So $ E(S) \neq J(S) $. In this case,  $ E(S) =\mathbb{C_{\infty}} $.  Note that $ J(f) =  \{z : |z| =1 \} $  and $ J(g) =  \{z : |z| =|a|\} $.  The Fatou set $ F(S) = \{z : |z| <1\; \text{or}\; |z| > |a| \}  $ is not S-backward invariant and so $ F(S) \neq W(S) $.  In this case, it is obvious that $ W(S) =\emptyset $. 
\end{exm}
On the basis of these two examples, Stankewitz \cite{stan} concluded that there are only two possibilities (as mentioned in examples 3.1 and 3.2)  for any polynomial semigroups. As a conclusion, he proved the following two theorems ({\cite[Theorems 1 and 2]{stan}}).
\begin{theorem}\label{sta1}
Let $ S = \langle f, g \rangle $ be a semigroup generated by two polynomials $ f $ and $ g $ of degree at least two. If $J(f) \neq J(g)  $, then $ E(S) =\mathbb{C_{\infty}} $.
\end{theorem}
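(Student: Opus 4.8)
The plan is to argue by contradiction. Suppose $E(S)\neq\mathbb{C_{\infty}}$ and set $W=W(S)=\mathbb{C_{\infty}}\setminus E(S)$, a nonempty open set which is completely invariant under every element of $S$, in particular under $f$ and under $g$. First I would record the inclusions $J(f)\cup J(g)\subseteq E(S)$: every closed set that is completely invariant under $f$ and has at least three points must contain $J(f)$ (the standard characterisation of $J(f)$ for $\deg f\geq 2$), and $E(S)$ is an intersection of such sets, so $J(f)\subseteq E(S)$; the same reasoning gives $J(g)\subseteq E(S)$. Hence $W\subseteq F(f)\cap F(g)$. I would then fix the polynomial picture: $\infty$ is a common superattracting fixed point, the basins $A_{f}(\infty)$ and $A_{g}(\infty)$ are connected completely invariant Fatou components with $J(f)=\partial A_{f}(\infty)$ and $J(g)=\partial A_{g}(\infty)$, and crucially $J(f)=J(g)$ holds if and only if $A_{f}(\infty)=A_{g}(\infty)$. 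The whole strategy is to derive $A_{f}(\infty)=A_{g}(\infty)$ from the mere existence of $W$, contradicting $J(f)\neq J(g)$.

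The heart of the proof is a rigidity statement near $\infty$, and this is the step I expect to be the main obstacle. On a common escaping neighbourhood of $\infty$ the maps $f$ and $g$ are governed by their escape-rate (Green's) functions, with $G_{f}\circ f=(\deg f)\,G_{f}$ and $G_{g}\circ g=(\deg g)\,G_{g}$, where $G_{f}-G_{g}$ is bounded and harmonic at $\infty$. Because $J(f)\neq J(g)$ forces $G_{f}\not\equiv G_{g}$ (the two B\"ottcher coordinates at $\infty$ genuinely differ), the pseudo-group generated by the local branches of $f^{\pm1}$ and $g^{\pm1}$ contains maps arbitrarily close to nontrivial translations of the logarithmic potential — for instance suitable branches of $f^{-1}\circ g$ — whose iterates act minimally on the equipotential levels of $G_{f}$. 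I would use this to show that any nonempty open set invariant under both $f$ and $g$ that meets a punctured neighbourhood of $\infty$ must in fact fill a full punctured neighbourhood of $\infty$. To feed $W$ into this mechanism I would note that if $W$ meets $A_{f}(\infty)$, then forward iteration of $f$ pushes points of $W$ arbitrarily close to $\infty$, so $W$ meets every punctured neighbourhood of $\infty$; the remaining possibility, that $W$ lies in the bounded Fatou components of both $f$ and $g$, I would rule out by applying $g^{-1}$ (since $J(f)\neq J(g)$ prevents $g$ from preserving $\overline{A_{f}(\infty)}$, some preimage escapes under $f$) or, failing that, by running the identical linearisation argument at a common finite attracting fixed point.

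Once $W$ is known to contain a punctured neighbourhood $P$ of $\infty$, the conclusion is short. Every point of $A_{f}(\infty)$ is eventually mapped into $P$ by forward iteration of $f$, so backward invariance of $W$ gives $A_{f}(\infty)\setminus\{\infty\}\subseteq W$, and symmetrically $A_{g}(\infty)\setminus\{\infty\}\subseteq W\subseteq F(f)\cap F(g)$. Thus $A_{g}(\infty)$ is a connected subset of $F(f)$ containing $\infty$, whence $A_{g}(\infty)\subseteq A_{f}(\infty)$, and by symmetry $A_{f}(\infty)\subseteq A_{g}(\infty)$. Therefore $A_{f}(\infty)=A_{g}(\infty)$ and $J(f)=\partial A_{f}(\infty)=\partial A_{g}(\infty)=J(g)$, contradicting $J(f)\neq J(g)$. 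This forces $W=\emptyset$, i.e. $E(S)=\mathbb{C_{\infty}}$. The delicate point throughout is the rigidity of the second step: transferring the incompatibility of the two Green's functions into the statement that no proper nonempty open set can be simultaneously completely invariant under $f$ and $g$ near their shared fixed point.
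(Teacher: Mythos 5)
First, a point of orientation: the paper itself does not prove this statement. Theorem~\ref{sta1} is quoted from Stankewitz \cite{stan}, and the only proof of this type actually carried out in the paper is that of the transcendental analogue, Theorem~\ref{nf1}, whose scheme is: show that each Julia set penetrates the interior of the other map's completely invariant Fatou component (for polynomials this component is the basin of infinity $A_f(\infty)$), use the invariance of $E(S)$ to conclude that $E(S)$ has non-empty interior, and finish with the interior lemma (Theorem~\ref{cijs2}). Your outer frame is partly correct and compatible with this: $J(f)\cup J(g)\subseteq E(S)$, hence $W\subseteq F(f)\cap F(g)$; the basin facts are right; and your endgame is clean --- \emph{if} $W$ contains a punctured neighbourhood $P$ of $\infty$, then $A_f(\infty)\setminus\{\infty\}\subseteq\bigcup_n f^{-n}(P)\subseteq W$, likewise for $g$, which forces $A_f(\infty)=A_g(\infty)$, i.e. $J(f)=J(g)$, a contradiction.

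The genuine gap is the middle step, which you yourself flag: the claim that a nonempty open set, completely invariant under $f$ and $g$ and meeting every punctured neighbourhood of $\infty$, must contain a full punctured neighbourhood of $\infty$. As stated this is essentially equivalent to the theorem, and the mechanism you sketch does not deliver it, for concrete reasons. (i) The translation structure you invoke degenerates in the most typical case: take $f=z^2+c_1$, $g=z^2+c_2$ with $c_1\neq c_2$ (so $J(f)\neq J(g)$). Both Green's functions equal $\log|z|+o(1)$, so $G_f-G_g\to 0$ at $\infty$, and the branch of $f^{-1}\circ g$ fixing $\infty$ is $z+\tfrac{c_2-c_1}{2z}+O(z^{-3})$, tangent to the identity; it is not close to any nontrivial translation of the potential, and its displacement of equipotential levels decays like $e^{-2G_f}$. (There is a hidden exact translation, $v\mapsto v+(c_2-c_1)$ in the coordinate $v=z^2$, but it moves points essentially \emph{along} level curves, not across them.) (ii) Even where minimality of an induced action on potential levels can be proved, that is a one-real-dimensional density statement, and density controls $\overline{W}$, not $W$: an open set can meet every equipotential level near $\infty$ without containing any punctured neighbourhood of $\infty$, so the two-dimensional filling your endgame needs is never established. (iii) Your second horn ($W$ inside the bounded Fatou components of both maps) rests on a non-argument: $f$ and $g$ need not share a finite attracting fixed point, nor have any attracting cycle at all (Siegel and parabolic cases). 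This horn, at least, closes elementarily: if $J(g)\subseteq K(f)$ and $J(f)\subseteq K(g)$, then taking polynomial hulls gives $K(f)=K(g)$ and hence $J(f)=J(g)$; so WLOG some $z_0\in J(g)\cap A_f(\infty)$, and since the backward $g$-orbit of a non-exceptional point of $W$ accumulates on all of $J(g)$, we get $\overline{W}\supseteq J(g)$, whence $W$ meets the open set $A_f(\infty)$. That repair still leaves (i)--(ii), i.e. the heart of the theorem, unproven; to complete the argument you need the actual rigidity input of \cite{stan}, or equivalently an argument in the style of Theorem~\ref{nf1} showing that $E(S)$ acquires interior, followed by Theorem~\ref{cijs2}.
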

 
\begin{theorem}\label{sta2}
Let $ S^{'}$ be a rational semigroup which contains two polynomials $ f $ and $ g $ of degree at least two. If $J(f) \neq J(g)  $, then $ E(S^{'}) =\mathbb{C_{\infty}} $
\end{theorem}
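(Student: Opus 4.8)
The plan is to exploit the monotonicity of the completely invariant Julia set under inclusion of semigroups, together with the already-established Theorem \ref{sta1}. The crucial observation is that although $S'$ need not be generated by $f$ and $g$ alone, it does contain them; and since $S'$ is a semigroup, hence closed under composition, the semigroup $S = \langle f, g\rangle$ generated by $f$ and $g$ is a subsemigroup of $S'$. Thus the comparison $S \subseteq S'$ is available, and Theorem \ref{sta1} applies to the smaller semigroup $S$ to give $E(S) = \mathbb{C_{\infty}}$.

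Next I would establish the set-theoretic inclusion $E(S) \subseteq E(S')$ whenever $S \subseteq S'$. This comes directly from the defining formula $E(S') = \bigcap\{G : G \text{ is closed, completely invariant under each } g \in S',\ \#(G) \geq 3\}$. If a closed set $G$ with $\#(G) \geq 3$ is completely invariant under every element of $S'$, then in particular it is completely invariant under every element of the smaller collection $S$; hence every set appearing in the defining family for $E(S')$ also appears in the defining family for $E(S)$. The family being intersected for $E(S')$ is therefore a subfamily of the one intersected for $E(S)$, and intersecting over a smaller family yields a larger set. This gives $E(S) \subseteq E(S')$.

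Combining the two steps finishes the argument: since $\mathbb{C_{\infty}} = E(S) \subseteq E(S') \subseteq \mathbb{C_{\infty}}$, we conclude $E(S') = \mathbb{C_{\infty}}$. I do not expect a genuine obstacle here, as the argument is essentially a monotonicity reduction to Theorem \ref{sta1}. The one point requiring care is the direction of the inclusion: it is easy to reverse it by forgetting that a stricter invariance requirement (invariance under \emph{more} maps) produces a \emph{smaller} defining family and hence a \emph{larger} intersection. I would also make explicit that the relation $S = \langle f,g\rangle \subseteq S'$ relies only on $f,g \in S'$ and on closure of $S'$ under composition, so no hypothesis on how $S'$ is generated is needed.
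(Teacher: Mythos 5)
Your proposal is correct, and there is nothing to compare it against inside this paper: Theorem \ref{sta2} is quoted here from Stankewitz \cite{stan} without proof. Your monotonicity reduction --- $\langle f,g\rangle \subseteq S'$, a set completely invariant under every element of $S'$ is in particular completely invariant under every element of $\langle f,g\rangle$, so the defining family for $E(S')$ is a subfamily of that for $E(\langle f,g\rangle)$ and hence $E(\langle f,g\rangle) \subseteq E(S')$, after which Theorem \ref{sta1} finishes --- is exactly the standard argument (it is in substance the proof in Stankewitz's original paper), and you correctly flag the one delicate point, namely the direction of the inclusion.
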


One of the main result in classical complex  dynamics is that if Julia set has non-empty interior, then Julia set explodes and it becomes whole complex plane. This result is generalized to completely invariant Julia set $ E(S) $. That is, Stankewitz \cite{stan} proved the following result which shows a connection between classical complex dynamics and semigroup dynamics ({\cite[Lemma 2]{stan}}).
\begin{theorem}
If $ E(S) $ has non-empty interior, then $ E(S) =\mathbb{C_{\infty}} $. 
\end{theorem}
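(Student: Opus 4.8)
The plan is to argue by contradiction, exploiting the minimality built into the definition of $ E(S) $. Suppose $ E(S) $ has non-empty interior but $ E(S) \neq \mathbb{C_{\infty}} $, and write $ V = \operatorname{int} E(S) $, so that $ V \neq \emptyset $. First I would record the basic structural facts: being an intersection of closed, completely invariant sets, $ E(S) $ is itself closed and completely invariant under every $ g \in S $, so that $ g(E(S)) \subseteq E(S) $ and $ g^{-1}(E(S)) \subseteq E(S) $. Combining the forward inclusion with the tautology $ A \subseteq g^{-1}(g(A)) $ gives $ E(S) \subseteq g^{-1}(E(S)) $, and together with backward invariance this yields the exact identity $ g^{-1}(E(S)) = E(S) $ for each $ g \in S $.

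The key step is to promote complete invariance from $ E(S) $ to its interior $ V $. Forward invariance is where I would use that each $ g \in S $ is holomorphic, hence an open self-map of $ \mathbb{C_{\infty}} $: the set $ g(V) $ is open and $ g(V) \subseteq g(E(S)) \subseteq E(S) $, so $ g(V) \subseteq \operatorname{int} E(S) = V $. Backward invariance is analogous, since $ g^{-1}(V) $ is open and $ g^{-1}(V) \subseteq g^{-1}(E(S)) = E(S) $, whence $ g^{-1}(V) \subseteq V $. Passing to complements then shows that $ \mathbb{C_{\infty}} \setminus V $ is closed and completely invariant under each $ g \in S $.

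Next I would verify that $ \mathbb{C_{\infty}} \setminus V $ is admissible as one of the competitor sets $ G $ in the intersection defining $ E(S) $. Under the standing assumption $ E(S) \neq \mathbb{C_{\infty}} $, the completely invariant Fatou set $ W(S) = \mathbb{C_{\infty}} \setminus E(S) $ is non-empty and open, and since $ W(S) \subseteq \mathbb{C_{\infty}} \setminus V $ is a non-empty open set it is infinite; in particular $ \#(\mathbb{C_{\infty}} \setminus V) \geq 3 $. Thus $ \mathbb{C_{\infty}} \setminus V $ is closed, completely invariant under each $ g \in S $, and of cardinality at least three, so minimality forces $ E(S) \subseteq \mathbb{C_{\infty}} \setminus V $, i.e. $ E(S) \cap V = \emptyset $. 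This contradicts $ \emptyset \neq V = \operatorname{int} E(S) \subseteq E(S) $, and I conclude $ E(S) = \mathbb{C_{\infty}} $.

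I expect the main obstacle to be the second step, namely the complete invariance of the interior $ V $: it genuinely uses the open mapping theorem for the generators together with the sharpened relation $ g^{-1}(E(S)) = E(S) $, not merely the one-sided invariances. A more hands-on alternative would invoke the blowing-up property of the Julia set of a single $ f \in S $ (the forward iterates of any neighbourhood meeting $ J(f) $ cover $ \mathbb{C_{\infty}} $ minus a finite exceptional set) in tandem with the forward invariance $ f^{n}(V) \subseteq V \subseteq E(S) $; that route, however, requires first locating a point of $ V $ inside some $ J(f) $, a complication that the minimality argument above avoids entirely.
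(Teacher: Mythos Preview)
Your minimality argument is correct. Note, however, that the $\mathbb{C}_\infty$ statement you were handed is the rational-semigroup version in Section~2, which the paper only \emph{cites} from Stankewitz and does not itself prove; the paper's own contribution is the transcendental analogue (Theorem~1.3, with $\mathbb{C}$ in place of $\mathbb{C}_\infty$), and that is the proof to compare against.

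The paper's proof of the transcendental version is exactly the ``hands-on alternative'' you sketch in your final paragraph: it takes a disk $D$ inside $E(S)^{\circ}$, asserts that $D$ meets $J(h)$ for some $h\in S$, and then uses the blowing-up property of $J(h)$ together with forward invariance of $E(S)$ to force $E(S)=\mathbb{C}$. As you anticipated, the step ``$D$ meets some $J(h)$'' is the fragile one, and the paper does not really justify it. Your route is genuinely different: by showing that $V=\operatorname{int}E(S)$ is itself completely invariant (via the open-mapping theorem) and feeding the closed completely invariant complement back into the defining intersection, you obtain a contradiction from pure minimality, with no dynamical input beyond openness and continuity of the generators. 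This is shorter, avoids the Julia-point location issue entirely, and transfers verbatim to the transcendental definition (where the $\#(G)\ge 3$ clause is absent, so the competitor check is trivial). One small simplification: for backward invariance of $V$ you only need $g^{-1}(E(S))\subseteq E(S)$, not the equality $g^{-1}(E(S))=E(S)$, since $g^{-1}(V)\subseteq g^{-1}(E(S))\subseteq E(S)$ already gives $g^{-1}(V)\subseteq V$.
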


\section{Comparison of sets $ E(S) $ and $ J(S) $ and proof of the main results}
In rational semigroup and in particular in polynomial semigroups, there are lot of studies over such completely invariant Fatou sets and Julia sets (see for instance in \cite{stan, stan2, stan1} for more detail) but there is nothing study over transcendental semigroup. So, in this paper, we concentrate more on completely invariant Julia set of transcendental semigroup $ S $. 

Next,  we workout some constructions for the comparison of sets $ E(S) $ and $ J(S) $ if semigroup $ S $ is generated by two transcendental entire functions. Let $ S =\langle f, g\rangle $ be a semigroup generated by transcendental entire functions $ f $ and $ g $. Note that $ J(h) \subset E(S) $ for all $ h \in S $ and so $ \bigcup_{h \in S}J(h) \subset E(S) $. Let us define the following countable collections of sets:
$$
\mathscr{E}_{0} = \{J(h) \}\; \text{for all} \; h \in S
$$ 
$$
\mathscr{E}_{1} = \bigcup_{h\in S}h^{-1}(\mathscr{E}_{0}) \cup \bigcup_{h\in S}h(\mathscr{E}_{0})
$$

$$
\ldots\;\;\;\;\;\;  \ldots  \;\; \;\; \;\;  \ldots   \;\;\;\;\;\;  \ldots \;\;\;\;\;\;  \ldots
$$

$$
\mathscr{E}_{n+1} = \bigcup_{h \in S}h^{-1}(\mathscr{E}_{n}) \cup \bigcup_{h \in S}h(\mathscr{E}_{n})
$$
and 
$$
\mathscr{E} = \bigcup_{n =0}^{\infty}\mathscr{E}_{n}
$$
where $ h^{-1}(\mathscr{A}) = \{h^{-1}(A): A\in \mathscr{A}\} $ and $ h(\mathscr{A}) = \{h(A): A\in \mathscr{A}\} $ for any collection of sets $ \mathscr{A} $ and a function $ h $. The following result will be the convenient description of the set $ E(S) $ of a transcendental semigroup $ S $. 
\begin{theorem}\label{cijs}
For a transcendental semigroup $ S =\langle f, g \rangle $, we have $ E(S) =\overline{\bigcup_{A\in \mathscr{E}}A} $.
\end{theorem}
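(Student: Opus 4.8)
The plan is to prove the two inclusions $\overline{\bigcup_{A\in\mathscr{E}}A}\subseteq E(S)$ and $E(S)\subseteq\overline{\bigcup_{A\in\mathscr{E}}A}$ separately. Throughout I write $U=\bigcup_{A\in\mathscr{E}}A$ and $K=\overline{U}$, so that the claim becomes $E(S)=K$.

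For the inclusion $K\subseteq E(S)$ I would argue by induction on the level $n$ that every $A\in\mathscr{E}_{n}$ satisfies $A\subseteq E(S)$. The base case $n=0$ is exactly the fact recorded after Definition \ref{cir}, namely $J(h)\subseteq E(S)$ for every $h\in S$. For the inductive step, suppose $A\subseteq E(S)$ with $A\in\mathscr{E}_{n}$; since $E(S)$ is completely invariant under each $h\in S$, monotonicity of images and preimages gives $h(A)\subseteq h(E(S))\subseteq E(S)$ and $h^{-1}(A)\subseteq h^{-1}(E(S))\subseteq E(S)$, which covers every generator of $\mathscr{E}_{n+1}$. Hence $U\subseteq E(S)$, and because $E(S)$ is closed we may take closures to obtain $K\subseteq E(S)$.

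For the reverse inclusion the idea is that $E(S)$ is, by Definition \ref{cir}, the intersection of all closed sets that are completely invariant under every $h\in S$; therefore it suffices to verify that $K$ itself is such a set, for then $E(S)\subseteq K$ automatically. Closedness of $K$ is immediate. The first step is to check invariance at the level of $U$ directly from the recursive construction: if $x\in U$, say $x\in A\in\mathscr{E}_{n}$, then $h(A)\in\mathscr{E}_{n+1}$ forces $h(x)\in U$, so $h(U)\subseteq U$; and if $h(x)\in U$, say $h(x)\in A\in\mathscr{E}_{n}$, then $h^{-1}(A)\in\mathscr{E}_{n+1}$ contains $x$, so $h^{-1}(U)\subseteq U$.

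It then remains to lift these two facts to the closure $K$. Forward invariance is painless: continuity of $h$ yields $h(K)=h(\overline{U})\subseteq\overline{h(U)}\subseteq\overline{U}=K$. The essential point — and the step I expect to be the main obstacle — is backward invariance $h^{-1}(K)\subseteq K$, because in general $h^{-1}(\overline{U})$ can be strictly larger than $\overline{h^{-1}(U)}$, so continuity alone does not suffice. Here I would invoke the fact that each $h\in S$, being a non-constant entire function, is an open map, and argue by contraposition: if $x\notin K$, choose an open neighbourhood $V$ of $x$ with $V\cap U=\emptyset$; then $h(V)$ is an open neighbourhood of $h(x)$, and if some $w\in V$ had $h(w)\in U$ we would get $w\in h^{-1}(U)\subseteq U$, contradicting $V\cap U=\emptyset$. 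Thus $h(V)\cap U=\emptyset$, whence $h(x)\notin K$, which is precisely $h^{-1}(K)\subseteq K$. This shows $K$ is closed and completely invariant under each $h\in S$, giving $E(S)\subseteq K$ and completing the proof.
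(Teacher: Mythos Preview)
Your argument is correct and follows the same two-inclusion skeleton as the paper: first $K\subseteq E(S)$ from the closedness and complete invariance of $E(S)$ together with $J(h)\subseteq E(S)$, then $E(S)\subseteq K$ by showing that $K$ is itself an admissible set in the intersection defining $E(S)$. Your treatment is in fact more careful than the paper's at the key step. The paper justifies complete invariance of $K=\overline{U}$ by asserting that each $h\in S$ is ``a continuous closed map'' so that $h(K)$ and $h^{-1}(K)$ are closed; but transcendental entire maps are \emph{not} closed maps on $\mathbb{C}$ (e.g.\ $e^{z}$ sends the closed left half-plane to the non-closed punctured disc $\{0<|w|\le 1\}$), and in any event closedness of $h(K)$ and $h^{-1}(K)$ does not by itself yield the required inclusions into $K$. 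You correctly isolate backward invariance of the closure as the nontrivial point and handle it with the open mapping theorem: from $h^{-1}(U)\subseteq U$ and openness of $h$ your contrapositive argument gives $h^{-1}(\overline{U})\subseteq\overline{U}$, which is exactly the mechanism needed here.
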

\begin{proof}
By the definition of $ E(S) $, it is closed, is completely invariant under each $ h \in S $ and contains $ J(h) $ for all $ h \in S $. So we can write 
$$
E(S) \supset \overline{\bigcup_{A\in \mathscr{E}}A} 
$$
Since the set $ \overline{\bigcup_{A\in \mathscr{E}}A}  $ is closed and contains $ J(h) $ for all $ h \in S$, it remains to show that it is also completely invariant under each $ h \in S $. Since $ h $ is a continuous closed map, so under each $ h \in S $, $ h(\overline{\bigcup_{A\in \mathscr{E}}A})  $ and $ h^{-1}(\overline{\bigcup_{A\in \mathscr{E}}A})  $ are
closed sets. It proves our claim.
\end{proof}
\begin{cor}
The set $ E(S) $ is perfect.
\end{cor}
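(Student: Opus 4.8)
The plan is to exploit that $E(S)$ is already closed by Definition~\ref{cir}, so the only thing to prove is that it has no isolated points. Let $D$ denote the derived set of $E(S)$, i.e. the set of all accumulation points of $E(S)$. Since $E(S)$ is closed we have $D\subseteq E(S)$, and $D$ is automatically closed (the set of accumulation points of any set is closed). The strategy is to show that $D$ is itself a closed, completely invariant set containing $J(h)$ for every $h\in S$, and then to invoke the minimality of $E(S)$ among such sets to conclude $E(S)\subseteq D$. Together with $D\subseteq E(S)$ this forces $D=E(S)$, meaning $E(S)$ has no isolated points, so that $E(S)$, being also closed, is perfect.

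First I would check the easy containment. Each $h\in S$ is a transcendental entire function, so its Julia set $J(h)$ is perfect; in particular every point of $J(h)$ is an accumulation point of $J(h)\subseteq E(S)$, hence of $E(S)$. Thus $J(h)\subseteq D$ for all $h\in S$, and in particular $D\neq\emptyset$. Next I would verify forward invariance of $D$. Take $z\in D$ and a sequence $z_{n}\to z$ with $z_{n}\in E(S)$, $z_{n}\neq z$. Because $h$ is non-constant entire, the fibre $\{w:h(w)=h(z)\}$ is discrete, so $z$ has a punctured neighbourhood on which $h(w)\neq h(z)$; hence $h(z_{n})\neq h(z)$ for large $n$. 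By continuity $h(z_{n})\to h(z)$, and $h(z_{n})\in E(S)$ by forward invariance of $E(S)$, so $h(z)$ is an accumulation point of $E(S)$, i.e. $h(z)\in D$.

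The crux is backward invariance of $D$, where the local structure of the analytic map $h$ must be used. Take $w$ with $h(w)\in D$ and a sequence $y_{n}\to h(w)$, $y_{n}\in E(S)$, $y_{n}\neq h(w)$. Writing $h(\zeta)-h(w)=(\zeta-w)^{k}u(\zeta)$ with $u$ analytic and non-vanishing near $w$, the solutions of $h(\zeta)=y_{n}$ lying near $w$ converge to $w$ as $y_{n}\to h(w)$; choosing such a solution $w_{n}$ for each $n$ gives $w_{n}\to w$, with $w_{n}\neq w$ since $h(w_{n})=y_{n}\neq h(w)$, and $w_{n}\in h^{-1}(E(S))\subseteq E(S)$ by backward invariance of $E(S)$. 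Hence $w$ is an accumulation point of $E(S)$, i.e. $w\in D$. Having shown that $D$ is a closed, completely invariant set containing every $J(h)$, the minimality of $E(S)$ yields $E(S)\subseteq D$, and the conclusion follows as in the plan. I expect the only delicate point to be exactly this backward direction, where one needs the open-mapping / local power-series behaviour of $h$ to produce preimages $w_{n}$ accumulating at $w$; the forward step and the containment of the Julia sets are routine.
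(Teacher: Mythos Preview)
Your argument is correct, and it is a genuinely different route from the one in the paper. The paper argues by contradiction: it assumes an isolated point $\alpha\in E(S)$, uses the description $E(S)=\overline{\bigcup_{A\in\mathscr{E}}A}$ from Theorem~\ref{cijs} to locate $\alpha$ in some $A\in\mathscr{E}$, then takes a punctured neighbourhood $U\setminus\{\alpha\}\subset W(S)$ and invokes complete invariance of $W(S)$ together with a Montel-type normality argument (each $h\in S$ omits the infinite set $E(S)$ on $U\setminus\{\alpha\}$) to reach a contradiction. Your proof instead works directly with the derived set $D$ of $E(S)$ and exploits the \emph{minimality} of $E(S)$ built into Definition~\ref{cir}: you show $D$ is closed and completely invariant (forward by continuity plus discreteness of fibres, backward by the local power-series/open-mapping behaviour of $h$), whence $E(S)\subseteq D$ and $E(S)$ has no isolated points. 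Your approach is more self-contained---it does not rely on Theorem~\ref{cijs} or on normality---and isolates the only analytic input needed (the local $k$-to-$1$ structure of a nonconstant holomorphic map) precisely at the backward-invariance step; the paper's approach, on the other hand, ties the result to the explicit construction of $E(S)$ and to Montel's theorem, which keeps the dynamical flavour in the foreground. One small remark: since Definition~\ref{cir} intersects over \emph{all} closed completely invariant $E$ with no size restriction, showing $D$ closed and completely invariant already forces $E(S)\subseteq D$; your verification that $J(h)\subseteq D$ is then not strictly needed for minimality, though it is harmless and confirms $D\neq\emptyset$.
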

\begin{proof}
Since $ J(h) \subset E(S) $ for all $ h \in S $ and $J(h)  $ is perfect, unbounded and contains an infinite number of points for each $ h \in S $.
This corollary will be proved if we show $ E(S) $ has no isolated points. Suppose $ \alpha \in E(S) $ is an isolated point. Then it an isolated point of some $ A\in \mathscr{E} $. Choose a neighborhood $ U $ of $ \alpha $ so that $ U-\{\alpha \} \subset W(S) $ where $ W(S) $ is completely invariant Fatou set of $ S $. Since $ h^{-1}(W(S)) \subset W(S) $ and $ h(W(S)) \subset W(S) $ for all $ h\in S $, so each $ h \in S $ omits $ E(S) $ on $ U-\{\alpha \}$, which implies that every element in $ S $ is normal on $ U $. Which is a contradiction.
\end{proof}

\begin{proof}[Proof of the theorem \ref{cijs2}]
Let $ E(S)^{\circ} \neq \emptyset $, where $ E(S)^{\circ}$ denotes the interior of $ E(S) $. Then there exists a disk $ D =\{|z -z_{0}|< r\} \subset E(S)$ such that it intersects $ J(h) $ for some $ h \in S $. Then by {\cite[Theorem 3.9]{hou}}, for each finite value $ a $, there is sequence $ z_{k} \to z_{0} \in J(h) $ and a sequence of positive integers $ n_{k}\to \infty $ such that $ f^{n_{k}}(z_{k}) = a, (k =1,2,3,\ldots) $ except at most for a finite value. Then by backward invariance of $ J(h) $, $ z_{k} \in J(h) $ and by forward invariance of $ J(h) $, $ a \in J(h) $. It shows that every finite value is in $J(h)$, except at  most a single value. Since $ h \in S $ is arbitrary, so we must have $ E(S) =\mathbb{C} $.
\end{proof}
\begin{cor}
If $ E(S) \neq \mathbb{C} $, then $ W(S) $ is unbounded.
\end{cor}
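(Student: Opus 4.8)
The plan is to deduce the corollary directly from Theorem \ref{cijs2}, exploiting only the fact that $E(S)$ and $W(S)$ partition $\mathbb{C}$ via $W(S) = \mathbb{C} - E(S)$. No new analytic input should be required: the entire content is already packaged in Theorem \ref{cijs2}.

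Concretely, I would argue by contraposition. Rather than proving the statement ``$E(S) \neq \mathbb{C} \Rightarrow W(S)$ is unbounded'' as written, it is cleaner to establish the logically equivalent implication ``$W(S)$ bounded $\Rightarrow E(S) = \mathbb{C}$''. So I would begin by assuming that $W(S)$ is bounded, which means there exists $R > 0$ with $W(S) \subset \{z \in \mathbb{C} : |z| \leq R\}$.

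Taking complements and using $W(S) = \mathbb{C} - E(S)$, this assumption yields $\{z \in \mathbb{C} : |z| > R\} \subset E(S)$. The set $\{z : |z| > R\}$ is a non-empty open subset of $E(S)$, so it lies in the interior $E(S)^{\circ}$; in particular $E(S)^{\circ} \neq \emptyset$. At this point Theorem \ref{cijs2} applies verbatim and forces $E(S) = \mathbb{C}$, which is precisely the conclusion of the contrapositive. Unwinding the contraposition gives the corollary as stated.

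The only obstacle here is really just an observation rather than a difficulty: one must notice that boundedness of the Fatou set $W(S)$ is the same as $E(S)$ containing a full neighborhood of infinity, and that such a neighborhood automatically endows $E(S)$ with non-empty interior. Once this identification is made, the theorem on interior-explosion does all of the remaining work, and no estimates involving the collections $\mathscr{E}_n$ or the dynamics of the generators $f$ and $g$ are needed.
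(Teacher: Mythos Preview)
Your argument is correct and matches the paper's own proof essentially line for line: the paper also assumes $W(S)$ is bounded, observes that $E(S)$ then has interior points, and invokes Theorem~\ref{cijs2} to reach a contradiction. Your write-up is simply a more explicit version of the same contrapositive reasoning.
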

\begin{proof}
If $ W(S) $ is bounded, then $ E(S) $ has interior points. By the above theorem, $ E(S) =\mathbb{C} $, which is a contradiction. 
\end{proof}
Similar to the description of the sets $ E(S) $ in lemma \ref{cijs}, we can give analogous description of the Julia set $ J(S) $ of transcendental semigroup $ S $. Let us define the following countable collections of sets:
$$
\mathscr{F}_{0} = \{J(h) \}\; \text{for all} \; h \in S
$$ 
$$
\mathscr{F}_{1} = \bigcup_{h\in S}h^{-1}(\mathscr{F}_{0}) 
$$

$$
\ldots \;\;\;\; \ldots \;\;\;\;\; \ldots 
$$

$$
\mathscr{E}_{n+1} = \bigcup_{h \in S}h^{-1}(\mathscr{F}_{n}) )
$$
and 
$$
\mathscr{F} = \bigcup_{n =0}^{\infty}\mathscr{F}_{n}
$$
where $ h^{-1}(\mathscr{A}) = \{h^{-1}(A): A\in \mathscr{A}\} $ for any collection of sets $ \mathscr{A} $ and a function $ h $. The following result will be the convenient description of the set $ J(S) $ of transcendental semigroup $ S $. 
\begin{theorem}\label{cijs1}
For a transcendental semigroup $ S =\langle f, g \rangle $, we have $ J(S) =\overline{\bigcup_{A\in \mathscr{F}}A} $.
\end{theorem}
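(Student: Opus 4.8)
The plan is to mirror the proof of Theorem \ref{cijs}, but now exploiting only the backward invariance of $J(S)$ (Theorem \ref{fi1}) in place of complete invariance, since the collection $\mathscr{F}$ is built from backward iterates alone. Writing $K = \overline{\bigcup_{A \in \mathscr{F}} A}$, I must prove the two inclusions $K \subset J(S)$ and $J(S) \subset K$.

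First I would establish $K \subset J(S)$. By Theorem \ref{fi1} the set $J(S)$ is closed and backward invariant under each $h \in S$, and by the second of the two listed invariance properties $J(h) \subset J(S)$ for every $h \in S$. I would then prove by induction on $n$ that every $A \in \mathscr{F}_{n}$ satisfies $A \subset J(S)$: the case $n = 0$ is precisely $J(h) \subset J(S)$, and if $A = h^{-1}(B)$ with $B \in \mathscr{F}_{n} \subset J(S)$ by hypothesis, then $A \subset h^{-1}(J(S)) \subset J(S)$ by backward invariance. Hence $\bigcup_{A \in \mathscr{F}} A \subset J(S)$, and since $J(S)$ is closed, $K \subset J(S)$.

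The reverse inclusion $J(S) \subset K$ carries the substantive content. First I would verify that $K$ is closed, backward invariant, and contains an infinite set: it is closed by construction and contains the infinite set $J(h)$; for backward invariance note that $h^{-1}(A) \in \mathscr{F}_{n+1}$ whenever $A \in \mathscr{F}_{n}$, so the bare union $Y = \bigcup_{A \in \mathscr{F}} A$ satisfies $h^{-1}(Y) \subset Y$, and because each generator is a nonconstant entire map it is open and continuous, whence $h^{-1}(\overline{Y}) \subset \overline{h^{-1}(Y)} \subset \overline{Y} = K$. Setting $V = \mathbb{C} \setminus K$, backward invariance of $K$ makes $V$ open and forward invariant: if $z \in V$ and $h(z) \in K$, then $z \in h^{-1}(K) \subset K$, a contradiction, so $h(V) \subset V$ for all $h \in S$. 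Consequently every $h \in S$ maps $V$ into $\mathbb{C} \setminus K$ and therefore omits on $V$ every value lying in $K$; since $K$ is infinite, the whole family $S$ omits at least two fixed finite values throughout $V$, so by Montel's theorem $S$ is normal on $V$, that is, $V \subset F(S)$. Taking complements gives $J(S) \subset K$, and combined with the first inclusion this yields $J(S) = K$.

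The step I expect to be the main obstacle is the passage from backward invariance of the bare union $Y$ to backward invariance of its closure $K$: this is exactly where the openness of the transcendental generators must be invoked carefully, since for a merely continuous map the closure of a backward invariant set need not stay backward invariant. Everything else is routine, namely the induction for the first inclusion and the Montel normality argument, the latter being the faithful transcendental counterpart of the classical fact that the complement of a closed backward invariant set meeting a Julia set in an infinite set must lie in the Fatou set.
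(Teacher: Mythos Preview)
Your proof is correct and follows the same two-inclusion skeleton as the paper, but it is considerably more complete on the reverse inclusion $J(S)\subset K$. The paper, having verified that $K$ is closed, contains each $J(h)$, and is backward invariant, simply declares the claim proved --- implicitly treating $J(S)$ as the \emph{smallest} closed backward-invariant set containing every $J(h)$, even though $J(S)$ was defined via normality, not minimality. You supply the missing bridge with Montel's theorem: the complement $V=\mathbb{C}\setminus K$ is forward invariant, so every $h\in S$ omits on $V$ the infinite set $K$, forcing $V\subset F(S)$ and hence $J(S)\subset K$. Your treatment of why backward invariance passes to the closure (nonconstant entire maps are open, hence $h^{-1}(\overline{Y})\subset\overline{h^{-1}(Y)}\subset\overline{Y}$) is also sharper than the paper's bare appeal to $h$ being ``a continuous closed map,'' which by itself does not yield the required containment. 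In short, you follow the paper's outline but close two genuine gaps it leaves open.
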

 \begin{proof}
By the definition of $ J(S) $, it is closed, is backward invariant under each $ h \in S $ and contains $ J(h) $ for all $ h \in S $. So we can write 
$$
J(S) \supset \overline{\bigcup_{A\in \mathscr{F}}A} 
$$
Since the set $ \overline{\bigcup_{A\in \mathscr{F}}A}  $ is closed and contains $ J(h) $ for all $ h \in S$, it remains to show that it is also backward invariant under each $ h \in S $. Since $ h $ is a continuous closed map, so under each $ h \in S $, $ h^{-1}(\overline{\bigcup_{A\in \mathscr{F}}A})  $ is a
closed set. It proves our claim.
\end{proof}

\begin{cor}\label{cijs3}
For a transcendental semigroup $ S = \langle f, g \rangle $, we have $ J(S) \subset E(S) $. 
\end{cor}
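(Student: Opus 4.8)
The plan is to prove Corollary~\ref{cijs3} by comparing the two explicit descriptions just established: Theorem~\ref{cijs1} gives $J(S)=\overline{\bigcup_{A\in\mathscr{F}}A}$ and Theorem~\ref{cijs} gives $E(S)=\overline{\bigcup_{A\in\mathscr{E}}A}$. The natural strategy is to exhibit a set-theoretic inclusion between the generating collections $\mathscr{F}$ and $\mathscr{E}$, and then push that inclusion through the closure operation.

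First I would observe that the two collections are built from the same seed $\mathscr{F}_0=\mathscr{E}_0=\{J(h):h\in S\}$, but at each stage the $\mathscr{E}$-construction applies \emph{both} forward images $h(\cdot)$ and preimages $h^{-1}(\cdot)$, whereas the $\mathscr{F}$-construction applies \emph{only} preimages $h^{-1}(\cdot)$. So the step I would carry out is an induction on $n$ showing $\mathscr{F}_n\subset\mathscr{E}_n$ for every $n\geq 0$. The base case $\mathscr{F}_0=\mathscr{E}_0$ is immediate. For the inductive step, $\mathscr{F}_{n+1}=\bigcup_{h\in S}h^{-1}(\mathscr{F}_n)$ sits inside $\bigcup_{h\in S}h^{-1}(\mathscr{E}_n)$ by the inductive hypothesis (applying $h^{-1}$ is monotone in the collection), and this in turn is contained in the larger union $\bigcup_{h\in S}h^{-1}(\mathscr{E}_n)\cup\bigcup_{h\in S}h(\mathscr{E}_n)=\mathscr{E}_{n+1}$.

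Having established $\mathscr{F}_n\subset\mathscr{E}_n$ for all $n$, taking unions over $n$ gives $\mathscr{F}=\bigcup_n\mathscr{F}_n\subset\bigcup_n\mathscr{E}_n=\mathscr{E}$ as collections of sets. Consequently $\bigcup_{A\in\mathscr{F}}A\subset\bigcup_{A\in\mathscr{E}}A$, and since closure is monotone, $\overline{\bigcup_{A\in\mathscr{F}}A}\subset\overline{\bigcup_{A\in\mathscr{E}}A}$. By the two theorems this reads exactly as $J(S)\subset E(S)$, which is what we want.

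I do not expect a genuine obstacle here: the result is essentially a formal consequence of the fact that the defining collection for $E(S)$ is obtained from that for $J(S)$ by adjoining the forward-image operation at every level. An alternative, even shorter route bypasses the explicit descriptions entirely: $E(S)$ is by Definition~\ref{cir} closed and completely invariant---in particular backward invariant---under each $h\in S$ and contains every $J(h)$, while $J(S)=\overline{\bigcup_{A\in\mathscr{F}}A}$ is the \emph{smallest} such closed backward-invariant set containing all the $J(h)$; hence $J(S)\subset E(S)$. The only point warranting a line of care is confirming that the minimality characterization of $J(S)$ coming out of Theorem~\ref{cijs1} is legitimately invoked, i.e. that $\overline{\bigcup_{A\in\mathscr{F}}A}$ really is contained in any closed backward-invariant superset of $\bigcup_{h\in S}J(h)$; but this is exactly the ``$\supset$'' half already argued in the proof of Theorem~\ref{cijs1}, so the corollary follows with no new work.
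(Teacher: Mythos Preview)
Your proposal is correct and follows essentially the same approach as the paper: the paper's proof simply asserts that $\mathscr{F}\subset\mathscr{E}$ by construction and then invokes Theorems~\ref{cijs} and~\ref{cijs1}, while you supply the obvious inductive justification of that containment. Your alternative minimality argument is also valid but goes beyond what the paper records.
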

\begin{proof}
Since by construction $ \mathscr{F} \subset \mathscr{E} $, so by theorems \ref{cijs}, \ref{cijs1}, assertion of the corollary follows.
\end{proof}
\begin{cor}
If $ J(S) $ has non-empty interior, then $ E(S) = \mathbb{C} $.
\end{cor}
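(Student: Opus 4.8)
The plan is to deduce this statement directly from the two results already in hand, namely Corollary \ref{cijs3}, which supplies the inclusion $J(S) \subset E(S)$, and Theorem \ref{cijs2}, which asserts that whenever the completely invariant Julia set has non-empty interior it must coincide with the whole plane. The structure of the argument is therefore a two-step chaining of these facts, with the only new ingredient being the elementary monotonicity of the interior operation under set inclusion.

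First I would unpack the hypothesis: if $J(S)$ has non-empty interior, then there is an open disk $D = \{|z - z_0| < r\} \subset J(S)$. Invoking Corollary \ref{cijs3}, which gives $J(S) \subset E(S)$ for the transcendental semigroup $S = \langle f, g \rangle$, I would conclude $D \subset E(S)$. Hence $E(S)$ itself contains a non-empty open set, so $E(S)^{\circ} \neq \emptyset$. At this point the hypothesis of Theorem \ref{cijs2} is satisfied for the set $E(S)$.

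Second I would apply Theorem \ref{cijs2} to $E(S)$: since its interior is now known to be non-empty, that theorem forces $E(S) = \mathbb{C}$, which is precisely the assertion to be proved.

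I do not expect a genuine obstacle here, since the corollary is essentially a formal consequence of the preceding results. The substantive content has already been discharged elsewhere: the blow-up behaviour of Julia sets of transcendental maps underlies Theorem \ref{cijs2}, while the containment $\mathscr{F} \subset \mathscr{E}$ of the defining collections underlies Corollary \ref{cijs3}. The only thing to verify in the present proof is the trivial point that a non-empty interior of a subset transfers to any superset, and the main care needed is simply to chain the two cited results in the correct order.
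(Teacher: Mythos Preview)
Your proposal is correct and follows exactly the paper's own approach, which simply cites Theorem \ref{cijs2} and Corollary \ref{cijs3} together. You merely spell out the monotonicity of the interior under inclusion, which the paper leaves implicit.
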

\begin{proof}
This corollary follows from theorem \ref{cijs2} and corollary  \ref{cijs3}.
\end{proof}

Finally we prove our main result that we proposed in the theorem \ref{nf1}.
\begin{proof}[Proof of the Theorem \ref{nf1}]
Let $ U $ be a completely invariant component  of $ F(f) $. Then by {\cite[Theorem 4.36]{hou}}, $ U $ is unbounded, simply connected and $ \partial U = J(f) $. Likewise, a completely invariant component $ V $ of $ F(g) $ is unbounded, simply connected and $ \partial V = J(g) $. $ J(f) \neq J(g) $ implies that $ \partial U \neq \partial V $. By {\cite[Theorem 3.8]{hou}}, $ J(f) $ and $ J(g) $ are unbounded, so $ U \cap J(g) \neq \emptyset $ and $ V \cap J(f) \neq \emptyset $. The fact $ \partial U \neq \partial V $ implies that $ J(f) $ must intersect interior of $ V $ and $ J(g) $ must intersect interior of $ U $. 

Let $ z \in J(f) \cap V^{\circ} $, where $ V^{\circ} $ is an interior of $ V $. Then by the forward invariance   of $ E(S) $ and $ V^{\circ} $ under the map $ g $, so, its nth iterates, that is, $ g^{n}(z) \in E(S)$ and $ g^{n}(z) \in V^{\circ}$ for all $ n \in \mathbb{N} $. Likewise, we can write $ f^{n}(z) \in E(S)$ and $ f^{n}(z) \in U^{\circ}$ for all $ n \in \mathbb{N} $. This shows that $ E(S) $ intersects open sets $ U^{\circ} $ and $ V^{\circ} $. So, $ E(S) $ intersects $U^{\circ} \cap V^{\circ}  $. Since $ E(S) $ is perfect and completely invariant set, so it contains all limits of the sequences $ (f^{n}) $ and $ (g^{n}) $. This prove that $ E^{\circ} \neq \emptyset$ and hence $ E(S) =\mathbb{C} $. 
\end{proof}


\begin{thebibliography}{30}



\bibitem {hin} Hinkkanen, A. and Martin, G.J.: \textit{The dynamics of semigroups of rational functions- I}, Proc. London Math. Soc. (3) 73, 358-384, (1996).

\bibitem {hou} Hua, X.H. and Yang, C.C.: \textit {Dynamic of transcendental functions}, Gordon and Breach Science Publication, (1998).


\bibitem {poo} Poon, K.K.: \textit{Fatou-Julia theory on transcendental semigroups}, Bull. Austral. Math. Soc. Vol- 58(1998) PP 403-410.



\bibitem {stan} Stankewitz, R.: \textit{Completely invariant Julia set of polynomial semigroup}, Proc. Amer. Math. Soc. Vil. 127, No. 10 (1999), 2889-2898.

\bibitem {stan2} Stankewitz, R.: \textit{Completely invariant sets of normality for rational semigroups},Complex Variables, Theory and Applications, Int. Journal, Vol. 40 (2007), 199-210.

\bibitem {stan1} Stankewitz, R., Sugawa, T. and Sumi, H.: \textit{Some counter examples in dynamics of rational semigroups}, arXiv:0708.3434v1 [math DS], 2007.


\end{thebibliography}
\end{document}